\newcommand{\runningtitle}{Running Title}
\newtheorem{thm}{Theorem}
\newtheorem{prop}[thm]{Proposition}
\theoremstyle{definition}
\newtheorem{crit}[thm]{Criterion}
\newtheorem*{prob*}{Problem}
\theoremstyle{remark}
\newcommand{\RR}{\mathbb R}              
\newcommand{\CC}{\mathbb C}              
\renewcommand{\Re}{\operatorname*{Re}} 
\newcommand{\D}{\ensuremath{\,\mathrm{d}}}
\newcommand{\ri}{\ensuremath{\mathrm{i}}}
\newcommand{\re}{\ensuremath{\mathrm{e}}}
\newcommand{\la}{\ensuremath{\lambda}}
\renewcommand{\epsilon}{\varepsilon}
\renewcommand{\geq}{\geqslant}
\providecommand{\clos}{\operatorname{clos}}
\newcommand{\abs}[1]{\left\lvert#1\right\rvert}
\newcommand\reallywidecheck[1]{%
\savestack{\tmpbox}{\stretchto{%
  \scaleto{%
    \scalerel*[\widthof{\ensuremath{#1}}]{\kern-.6pt\bigwedge\kern-.6pt}%
    {\rule[-\textheight/2]{1ex}{\textheight}}
  }{\textheight}%
}{0.5ex}}%
\stackon[1pt]{#1}{\scalebox{-1}{\tmpbox}}%
}
\newcommand\reallywidehat[1]{%
\savestack{\tmpbox}{\stretchto{%
  \scaleto{%
    \scalerel*[\widthof{\ensuremath{#1}}]{\kern-.6pt\bigwedge\kern-.6pt}%
    {\rule[-\textheight/2]{1ex}{\textheight}}
  }{\textheight}%
}{0.5ex}}%
\stackon[1pt]{#1}{\tmpbox}%
}
\newcommand{\AckYNCSeed}[1]{#1 gratefully acknowledges support from Yale-NUS College seed grant IG21-SG101.}
\newcommand{\AckNIDispHyd}[1]{#1 would like to thank the Isaac Newton Institute for Mathematical Sciences for support and hospitality during programme \emph{Dispersive hydrodynamics: mathematics, simulation and experiments, with applications in nonlinear waves}, when work on this paper was undertaken. This work was supported by EPSRC Grant Number EP/R014604/1.}
\numberwithin{equation}{section}
\providecommand{\Dom}{\operatorname{Dom}}
\providecommand{\bigoh}[1]{\mathcal{O}\left(#1\right)}
\providecommand{\lindecayla}{\bigoh{\la^{-1}}}
\providecommand{\argdot}{{}\cdot{}}
\newsavebox{\accentbox}
\definecolor{colorGAMMAcutsDARK}{rgb}{1,0.65,0}
\colorlet{colorGAMMAcuts}{colorGAMMAcutsDARK!25}
\definecolor{colorGAMMAaP}{rgb}{1,0.9,0.9}
\definecolor{colorGAMMAaM}{rgb}{0.9,1,0.9}
\definecolor{colorGAMMAzP}{rgb}{0.9,0.9,1}
\colorlet{colorGAMMAzM}{black!05}
\newtcbox{\boxGAMMAc}[1][]{enhanced, colback=colorGAMMAcuts, frame style={opacity=0}, interior style={opacity=1}, nobeforeafter, tcbox raise base, shrink tight, extrude by=1mm, #1}
\newtcbox{\boxGAMMAaP}[1][]{enhanced, colback=colorGAMMAaP, frame style={opacity=0}, interior style={opacity=1}, nobeforeafter, tcbox raise base, shrink tight, extrude by=1mm, #1}
\newtcbox{\boxGAMMAaM}[1][]{enhanced, colback=colorGAMMAaM, frame style={opacity=0}, interior style={opacity=1}, nobeforeafter, tcbox raise base, shrink tight, extrude by=1mm, #1}
\newtcbox{\boxGAMMAzP}[1][]{enhanced, colback=colorGAMMAzP, frame style={opacity=0}, interior style={opacity=1}, nobeforeafter, tcbox raise base, shrink tight, extrude by=1mm, #1}
\newtcbox{\boxGAMMAzM}[1][]{enhanced, colback=colorGAMMAzM, frame style={opacity=0}, interior style={opacity=1}, nobeforeafter, tcbox raise base, shrink tight, extrude by=1mm, #1}
\author{
    D. A. Smith \\
    \footnotesize Division of Science, Yale-NUS College, Singapore, \\ \footnotesize and Department of Mathematics, National University of Singapore, Singapore \\
    \footnotesize\href{mailto:dave.smith@yale-nus.edu.sg}{\texttt{dave.smith@yale-nus.edu.sg}}
}
\newcommand\theauthorshort{D. A. Smith}
\title{Fokas diagonalization}
\renewcommand{\runningtitle}{Fokas diagonalization}
\date{\today}
\begin{document}
\maketitle
\thispagestyle{fancy}

\begin{abstract}
    A method for solving linear initial boundary value problems was recently reimplemented as a true spectral transform method.
    As part of this reformulation, the precise sense in which the spectral transforms diagonalize the underlying spatial differential operator was elucidated.
    That work concentrated on two point initial boundary value problems and interface problems on networks of finite intervals.
    In the present work, we extend these results, by means of three examples, to new classes of problems: problems on semiinfinite domains, problems with nonlocal boundary conditions, and problems in which the partial differential equation features mixed derivatives.
    We show that the transform pair derived via the Fokas transform method features the same Fokas diagonalization property in each of these new settings, and we argue that this weak diagonalization property is precisely that needed to ensure success of a spectral transform method.
\end{abstract}




\section{Introduction} \label{sec:Introduction}

Fourier transform methods for solving initial boundary value problems for partial differential equations were instrumental to the advances of mathematical physics in the 19th century.
They essentially all work by reexpressing a spatial differential operator as a diagonal multiplication operator acting in the spectral domain.
This diagonalization reduces the spatiotemporal partial differential equation to an ordinary differential equation in the time variable only.
After determining the solution of the ordinary differential equation, an inverse transform is used to map back from the spectral domain to coordinate space, yielding the solution of the original initial boundary value problem.

In the present work, we describe an advance on this method.
Specifically, we explain how a family of solution methods for initial boundary value problems, known collectively as the Fokas transform method, or unified transform method, and developed over the past quarter century, can be interpreted as an advance on the classical Fourier transform method.
We identify, within the Fokas transform method, pairs of transforms and inverse transforms that can be used to solve each problem.
We show that, although these transforms do not diagonalize the relevant spatial differential operators in the usual sense, they each posses precisely the diagonalization property that is required for their use in a spectral transform method.
This weak diagonalization property is motivated in~\S\ref{ssec:Introduction.ClassicalTransform}--\ref{ssec:Introduction.GeneralTransformMethod}, and characterized informally in criterion~\ref{crit:FokasDiagonalization}.
In~\S\ref{sec:Results}, it is precisely stated and proved for three examples which are all beyond the class covered in the recent article~\cite{ABS2022a}.

\subsection{The classical spectral transform method} \label{ssec:Introduction.ClassicalTransform}

Suppose we wish to solve a problem like
\begin{prob*}[Half line Dirichlet problems for the heat equation]
    \begin{subequations} \label{eqn:IBVPClassical1}
    \begin{align}
        \label{eqn:IBVPClassical1.PDE} \tag{\theparentequation.PDE}
        [\partial_t-\partial_x^2]q(x,t) &= 0 & (x,t) &\in (0,\infty) \times (0,T), \\
        \label{eqn:IBVPClassical1.IC} \tag{\theparentequation.IC}
        q(x,0) &= Q(x) & x &\in[0,\infty), \\
        \label{eqn:IBVPClassical1.BC} \tag{\theparentequation.BC}
        q(0,t) &= 0 & t &\in[0,T],
    \end{align}
    in which $Q\in\mathcal{S}[0,\infty)$, the space of smooth functions on the half line, rapidly decaying along with all of their derivatives.
    \end{subequations}
    The differential operator $L$ defined by
    \begin{equation} \label{eqn:OperatorClassical1}
        L\phi=-\phi', \qquad \Dom L = \left\{ \phi\in \mathcal{S}[0,\infty) : \phi(0)=0 \right\}
    \end{equation}
    is such that, interpreting $L$ as always acting in the spatial variable, we may express~\eqref{eqn:IBVPClassical1.PDE} as $[\partial_t +L]q(x,t)=0$.
\end{prob*}
Consider some hypothetical transform $F$ which accepts a function of the spatial variable $x$ and returns a function of a new ``spectral'' variable $\la$.
We will apply this transform in the spatial variable to functions of space and timeto yield functions of $\la$ and $t$.
We suppose that this can be done in such a way that the transform commutes with the temporal derivative operator.
Suppose also that the transform is linear.

We can apply such a transform, to~\eqref{eqn:IBVPClassical1.PDE} to get
\[
    \frac{\D}{\D t} F[q](\la;t) + F[L q](\la;t) = 0.
\]
If $F$ has further the \emph{diagonalization} property that $F[L\phi](\la)=\la^2F[\phi](\la)$, then this simplifies to the temporal ordinary differential equation
\[
    \frac{\D}{\D t} F[q](\la;t) + \la^2 F[q](\la;t) = 0
\]
for $F[q](\la;t)$.
Its solution is
\[
    F[q](\la;t) = \re^{-\la^2t} C(\la),
\]
and evaluation at $t=0$ combined with appeal to~\eqref{eqn:IBVPClassical1.IC} establishes that $C(\la)=F[Q](\la)$.
Finally, we suppose that there exists another transform $F^{-1}$ which allows us to map back from functions of the spectral variable to functions of $x$, and that this second transform is an inverse of the original transform in the sense that, for all $x$, $F^{-1}[F[\phi]](x) = \phi(x)$.
Then
\[
    q(x,t) = F^{-1}\big[F[q](\argdot;t)\big](x) = F^{-1}\left[\re^{-\argdot^2t} F[Q] \right](x).
\]

This is a representation of the solution of problem~\eqref{eqn:IBVPClassical1} which relies only on the initial datum $Q$ and this speculative spectral transform.
So it remains only to find such a transform and the method is complete.
It is worth noting that, because we have not explicitly used~\eqref{eqn:IBVPClassical1.BC} in the solution method, they must be connected to the transform itself.
Consequently, if problem~\eqref{eqn:IBVPClassical1} and another with different boundary conditions are to have different solutions, then they must have different associated transforms.
However, this makes the method very general; by simply selecting a different transform, the method is immediately applicable to the Neumann or Robin problems for the heat equation on the half line.
The method also requires little modification before its application to partial differential equations with different $L$, or even different domains.

For this problem, of course, such $F$ is well known: the Fourier sine transform.
For the Neumann problem, the $F$ should be the Fourier cosine transform.
For the finite interval Dirichlet heat problem, the discrete Fourier sine transform, whose inverse is more commonly called the Fourier sine series, is the right transform.
But with a new transform required for each problem, we quickly exhaust the classical spectral transforms.

\subsection{Derivation of transforms} \label{ssec:Introduction.DerivationTransforms}
For finite interval problems, the appropriate transform is typically derived by separation of variables and solution of an appropriate Sturm-Liouville problem.
The classical Sturm-Liouville theory does not apply to problems of higher order, nonselfadjoint problems, or problems with more general types of boundary conditions, but much is known for many such problems.
Two point boundary value problems have seen the most attention, where Birkhoff's work~\cite{Bir1908b} was instrumental in showing that the crucial completeness and orthogonality results of Sturm-Liouville theory may be extended to a broad class.
This work was greatly expanded on over the next few decades, but Jackson and Hopkins soon identified examples where completeness of eigenfunctions fails~\cite{Jac1915a,Hop1919a}, dooming the usual approach to definition of transform pairs using eigenfunctions of the spatial differential operator and those of its adjoint.
Operators have been classified as ``regular'', ``irregular'', and ``degenerate'', with various definitions of each class, but the general theme of regular operators having all the properties we need to construct transform pairs, irregular operators having some impediments that make the construction more delicate, and degenerate operators, such as those identified by Jackson and Hopkins, usually considered beyond scope.
Full surveys are given by Locker~\cite{Loc2000a,Loc2008a}, with updates for irregular operators and beyond two point operators provided by Freiling~\cite{Fre2012a}.

An alternative approach is required to derive the appropriate transform pair, at least for degenerate irregular operators.
As the Fokas transform method is a spectral method, it is reasonable to hope that it may be a source for the appropriate transfoms.
As we shall argue, the situation is slightly more complex: the transform method itself must be modified to admit the transforms derived via the Fokas transform method, but the modification is natural.
A brief discussion of the problems solved via the Fokas transform method is appropriate.
Rather than attempting a full survey, we shall refer to a few examples that emphasize the classes of problems solved, because such variations may impact on the spectral transform method.
The Fokas transform method has been used to solve problems on the finite interval~\cite{FP2001a,Smi2012a}, the half line~\cite{FW2018a}, and interface domains~\cite{DPS2014a,DSS2016a,DS2020a}.
It has also been used to solve problems with multipoint~\cite{PS2018a} and nonlocal~\cite{MS2018a} conditions replacing the usual boundary conditions.
Extension to partial differential equations involving mixed derivatives have been addressed~\cite{DV2013a,CGM2021a}, and the method is also well understood for linear systems of partial differential equations~\cite{DGSV2018a}.
The Fokas transform method is much broader than the examples listed so far, having been applied to elliptic and hyperbolic equations without a time variable~\cite{Fok2001a,Cro2015a,CL2018a} and integrable semilinear equations.
These, along with semidiscrete problems~\cite{BH2008a}, and problems where the boundaries~\cite{FP2007a,FPX2019a} or boundary conditions~\cite{GPV2019a,ST2022a} move in time, are not discussed in this paper, because spectral methods in which the spectrum is time dependent are necessarily more complex.
See~\cite{Fok2008a,Pel2014a} and their references and citations for a picture of the broader Fokas transform method and~\cite{DTV2014a} for an introduction.

\subsection{Example problems} \label{ssec:Introduction.Problems}
We list here three problems which cannot be treated using the classical spectral transforms.
All of these problems have been solved using the Fokas transform method, and references are provided for the derivation of their corresponding Fokas transforms.
We select these particular problems for attention because their solution representations are relatively simple, yet the problems are sufficiently varied both to highlight the broad applicability of the Fokas transform method itself and to demonstrate that our reinterpretation of this method is equally universal.
Among these examples appear no two point boundary value problems, multipoint problems on finite intervals, nor interface problems on networks of finite intervals, because there already exists a complete charecterization of Fokas diagonalization in these settings~\cite{ABS2022a}.

\begin{prob*}[Half line Neumann problem for the Stokes equation]
    \begin{subequations} \label{eqn:IBVPStokes}
    \begin{align}
        \label{eqn:IBVPStokes.PDE} \tag{\theparentequation.PDE}
        [\partial_t+\partial_x^3]q(x,t) &= 0 & (x,t) &\in (0,\infty) \times (0,T), \\
        \label{eqn:IBVPStokes.IC} \tag{\theparentequation.IC}
        q(x,0) &= Q(x) & x &\in[0,\infty), \\
        \label{eqn:IBVPStokes.BC} \tag{\theparentequation.BC}
        q_x(0,t) &= 0 & t &\in[0,T],
    \end{align}
    in which $Q\in\mathcal{S}[0,\infty)$.
    \end{subequations}
    We also define the differential operator $L$ by
    \begin{equation} \label{eqn:OperatorStokes}
        L\phi=\phi''', \qquad \Dom L = \left\{ \phi\in \mathcal{S}[0,\infty) : \phi'(0)=0 \right\}
    \end{equation}
    to represent the spatial part of the problem.
\end{prob*}
The Dirichlet version of problem~\eqref{eqn:IBVPStokes} was solved using the Fokas transform method in~\cite[\S3.3]{DTV2014a}.
It is straightforward to adapt their argument to the Neumann case.
The finite interval two point analogue of $L$, with two supplementary boundary conditions provided at the other end of the spatial interval is precisely the operator Jackson and Hopkins identified as having incomplete eigenfunctions~\cite{Jac1915a,Hop1919a}.

\begin{prob*}[Finite interval problem for the heat equation with a nonlocal condition]
    \begin{subequations} \label{eqn:IBVPHeat}
    \begin{align}
        \label{eqn:IBVPHeat.PDE} \tag{\theparentequation.PDE}
        [\partial_t-\partial_x^2] q(x,t) &= 0 & (x,t) &\in (0,\infty) \times (0,T), \\
        \label{eqn:IBVPHeat.IC} \tag{\theparentequation.IC}
        q(x,0) &= Q(x) & x &\in[0,1], \\
        \label{eqn:IBVPHeat.BC} \tag{\theparentequation.BC}
        \int_0^1 K(y)q(y,t)\D y = 0 &= q_x(1,t) & t &\in[0,T],
    \end{align}
    in which $Q\in\operatorname{C}[0,1]$ and $K:[0,1]\to\RR$ is both supported in a neighbourhood of $0$ and globally sufficiently smooth.
    \end{subequations}
    The coresponding differential operator $L$ is given by
    \begin{equation} \label{eqn:OperatorHeat}
        L\phi=-\phi'', \qquad \Dom L = \left\{ \phi\in \operatorname{C}[0,1] : \int_0^1 K(y)\phi(y)\D y = 0 = \phi'(1) \right\}.
    \end{equation}
\end{prob*}
Problem~\eqref{eqn:IBVPHeat} was solved using the Fokas transform method in~\cite{MS2018a}.
It has a physical application in the problem of determining the concentration of a translucent mixture using a light sensor of finite width.

\begin{prob*}[Half line Dirichlet problem for the linearized BBM equation]
    \begin{subequations} \label{eqn:IBVPlBBM}
    \begin{align}
        \label{eqn:IBVPlBBM.PDE} \tag{\theparentequation.PDE}
        [\partial_t(1-\partial_x^2)+\partial_x]q(x,t) &= 0 & (x,t) &\in (0,\infty) \times (0,T), \\
        \label{eqn:IBVPlBBM.IC} \tag{\theparentequation.IC}
        q(x,0) &= Q(x) & x &\in[0,\infty), \\
        \label{eqn:IBVPlBBM.BC} \tag{\theparentequation.BC}
        q(0,t) &= 0 & t &\in[0,T],
    \end{align}
    \end{subequations}
    where $Q\in\mathcal{S}[0,\infty)$ and $Q(0)=0$.
    We also define the differential operators $L$ and $M$ by
    \begin{subequations} \label{eqn:OperatorlBBM}
    \begin{align} \label{eqn:OperatorlBBM.L}
        L\phi &= \phi',    & \Dom L &= \left\{ \phi\in \mathcal{S}[0,\infty) : \phi(0)=0 \right\}, \\
        M\phi &= 1-\phi'', & \Dom M &= \Dom L,
    \end{align}
    \end{subequations}
    so that the partial differential and boundary conditions can be represented as $[\partial_tM+L]q(\argdot,t)=0$.
\end{prob*}
Problem~\eqref{eqn:IBVPlBBM} was solved in~\cite{DV2013a} using the Fokas transform method, as part of the full class of Robin problems.
We specialise here to the Dirichlet problem so that the exposition may be aided by simpler formulae.
Problem~\eqref{eqn:IBVPlBBM} represents the small amplitude linearization of the bidirectional water wave model derived by Benjamin, Bona, and Mahoney~\cite{BBM1972a}.

\subsection{A more general spectral transform method} \label{ssec:Introduction.GeneralTransformMethod}
Suppose we aim to solve one of the above problems.
The classical Fourier sine and cosine transforms (or their discrete analogues for the finite interval problem) will not work.
Indeed, for the above problems, there are no known transforms that have both properties of diagonalizing the spatial differential operator and being invertible.
Therefore, our simple transform method will not succeed.
We provide here the archetype of a more general transform method which, as we argue in~\S\ref{sec:Results}, is applicable to these problems.
We decribe the method for problem~\eqref{eqn:IBVPStokes}, but the method is identical for problem~\eqref{eqn:IBVPHeat} and requires only natural generalization to problem~\eqref{eqn:IBVPlBBM}.

Suppose we have a transform $F$ and apply it in the spatial variable to~\eqref{eqn:IBVPStokes.PDE}, obtaining
\[
    \frac{\D}{\D t} F[q](\la;t) + F[Lq](\la;t) = 0,
\]
for a certain set of complex $\la$.
Because our transform may not diagonalize the differential operator $L$, we must admit the possibility that $L$ is diagonalized with a remainder:
\begin{equation} \label{eqn:GeneralTransform0}
    F[Lq](\la;t) = \omega(\la)F[q](\la;t) + R[q](\la;t).
\end{equation}
Then
\[
    \frac{\D}{\D t} F[q](\la;t) + \omega(\la)F[q](\la;t) + R[q](\la;t) = 0.
\]
Integration by parts tells us that the ``eigenvalue'' $\omega(\la)$ must be $-\ri\la^3$ and the boundary terms are collected into the remainder transform $R[q](\la;t)$.

We solve the ordinary differential equation for $F[q](\la;t)$, treating the remainder transform as if it were an inhomogeneity.
Indeed, multiplying by $\re^{\omega(\la)t}$ yields
\[
    \frac{\D}{\D t} \left( \re^{\omega(\la)t} F[q](\la;t) \right) + \re^{\omega(\la)t} R[q](\la;t),
\]
and integration in time from $0$ to $t$, followed by application of~\eqref{eqn:IBVPStokes.IC} and rearrangement yields
\begin{equation} \label{eqn:GeneralTransform1}
    F[q](\la;t) = \re^{-\omega(\la)t}F[Q](\la) - \int_0^t \re^{\omega(\la)(s-t)} R[q](\la;s).
\end{equation}

Arriving at equation~\eqref{eqn:GeneralTransform1} has required no special properties of the transform other than linearity and commutativity with the temporal derivative, both in the first step.
We have not yet assumed that the transform diagonalizes $L$.
At this point, so that we may obtain an expression for $q$ itself, we must assume that the transform is invertible.
We denote the inverse by $F^{-1}$, and assume it too is linear.
Then equation~\eqref{eqn:GeneralTransform1} simplifies to
\begin{equation} \label{eqn:GeneralTransform2}
    q(x,t) = F^{-1} \left[ \re^{-\omega t}F[Q] \right](x) - F^{-1} \left[ \int_0^t \re^{(s-t)\omega} R[q](\argdot;s) \D s\right](x).
\end{equation}
Unfortunately, our earlier pretence notwithstanding, $R[q]$ is not data, so equation~\eqref{eqn:GeneralTransform2} does not provide an effective representation of the solution to problem~\eqref{eqn:IBVPStokes}.
To proceed, let us suppose further that the latter term evaluates to zero whenever $q$ satisfies the boundary conditions.
Then the solution of the initial boundary value problem has been obtained:
\begin{equation} \label{eqn:GeneralTransform3}
    q(x,t)=F^{-1} \left[ \re^{-\omega t}F[Q] \right](x).
\end{equation}

The extra assumption is a restriction on $R$, so may be seen as part of the replacement for the diagonalization property of the transform.
Therefore, the requirements on the transform $F$ to make the spectral transform method work are:
\begin{crit} \label{crit:Invertibility}
    $F$ is invertible, and both $F$ and its inverse $F^{-1}$ are linear.
\end{crit}
\begin{crit} \label{crit:FokasDiagonalization}
    $F$ diagonalises the differential operator $L$ (which describes the spatial part of the initial boundary value problem) in the sense of equation~\eqref{eqn:GeneralTransform0}, with remainder $R$ having the property that, provided $q$ is sufficiently smooth and satisfies the boundary conditions, the latter term of equation~\eqref{eqn:GeneralTransform2} evaluates to $0$.
\end{crit}

Clearly, each initial boundary value problem will require its own transform $F$.
But transforms obeying analogues of both criteria~1 and~2 have been constructed for problems posed on the finite interval, with arbitrary constant coefficient differential operator and any linear boundary conditions~\cite{ABS2022a}.
Because these transform pairs were discovered via the Fokas transform method, we call the weak diagonalization criterion~2 \emph{Fokas diagonalization}.
As we shall argue below, there exist transforms that are tailored to each of the problems introduced above, which also exhibit Fokas diagonalization, so the above generalized transform method may be applied without hindrance.
Specifically, for each problem, we shall define the transform pair and prove criteria~\ref{crit:Invertibility} and~\ref{crit:FokasDiagonalization} as theorems.
Beyond~\cite{ABS2022a}, the papers~\cite{FS2016a,PS2016a,Smi2015a} provide an earlier view of Fokas diagonalization and the spectral transform method for two point problems and half line problems in which the spatial operator has monomial character.

\section{Results} \label{sec:Results}

\subsection{Half line Neumann problem for the Stokes equation} \label{ssec:Results.Stokes}

Adapting~\cite[\S3.3]{DTV2014a}, the Fokas transform pair
\begin{subequations} \label{eqn:defnF.Stokes}
\begin{align}
    F[\phi](\la) &=
    \begin{cases}
        \int_0^\infty \re^{-\ri\la y} \phi(y) \D y & \la \in \RR, \\
        \int_0^\infty \phi(y) \left[ \alpha^2 \re^{-\ri\alpha\la y} + \alpha \re^{-\ri\alpha^2\la y} \right] \D y \qquad & \la \in \partial D^+,
    \end{cases} \\
    F^{-1}[f](x) &= \frac1{2\pi} \int_{\RR\cup \partial D^+} \re^{\ri\la x} f(\la) \D\la \hspace{9.245em} x \in [0,\infty),
\end{align}
\end{subequations}
may be derived.
Here, $D^+$ is the sector $\arg(\la)\in(\frac\pi3,\frac{2\pi}3)$, $\partial D^+$ is the positively oriented contour traversing its boundary, the contour denoted $\RR$ is oriented in the increasing sense, and the primitive cube root of unity $\alpha=\re^{2\pi\ri/3}$.

\begin{prop} \label{prop:Stokes.Validity}
    If $F,F^{-1}$ are defined by equations~\eqref{eqn:defnF.Stokes}, then they are both linear and, for all $\phi$ sufficiently smooth and all $x\in(0,1)$, $F^{-1}[F[\phi]](x) = \phi(x)$.
\end{prop}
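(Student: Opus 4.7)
The plan is to verify linearity by inspection, then prove the inversion identity by splitting the inverse-transform contour $\RR \cup \partial D^+$ into its two components and analysing each.

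Linearity of $F$ and $F^{-1}$ is immediate from their definitions as integrals whose integrand depends linearly on the input function. For the inversion, substituting the definition of $F[\phi]$ gives $F^{-1}[F[\phi]](x) = I_\RR(x) + I_{\partial D^+}(x)$, the contributions from the two contour components. On $\RR$, $F[\phi](\la)$ coincides with the ordinary Fourier transform of the extension of $\phi$ by zero to the real line; since $\phi \in \mathcal{S}[0,\infty)$, classical Fourier inversion immediately yields $I_\RR(x) = \phi(x)$ for each $x > 0$, at which the extension is continuous.

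For $I_{\partial D^+}$, the plan is to show it vanishes. Substituting the $\partial D^+$ definition of $F[\phi]$ and swapping the order of integration, the inner integrals become $\int_{\partial D^+} \re^{\ri\la(x-\alpha y)}\,\D\la$ and $\int_{\partial D^+} \re^{\ri\la(x-\alpha^2 y)}\,\D\la$. The swap is justified by a direct computation of imaginary parts on the rays $\arg\la \in \{\pi/3, 2\pi/3\}$, which shows both integrands are bounded there by $\re^{-\sqrt 3 |\la| x/2}$, independently of $y \geq 0$; combined with the rapid decay of $\phi$, this makes the double integral absolutely convergent for $x > 0$, so Fubini applies.

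To evaluate each inner contour integral, observe that for $x, y > 0$ the complex numbers $x-\alpha y$ and $x-\alpha^2 y$ are complex conjugates with arguments in $(-\pi/3, 0)$ and $(0, \pi/3)$ respectively. Together with $\arg\la \in [\pi/3, 2\pi/3]$ throughout $\overline{D^+}$, this places the arguments of $\la(x-\alpha y)$ and $\la(x-\alpha^2 y)$ in $(0,2\pi/3)$ and $(\pi/3,\pi)$ respectively, both strictly inside $(0,\pi)$. Hence each integrand is entire, bounded on $\overline{D^+}$, and decays exponentially as $|\la|\to\infty$ throughout $D^+$. Applying Cauchy's theorem to the region bounded by a truncation of $\partial D^+$ and a circular arc of radius $R$ in $D^+$, then letting $R\to\infty$ and using Jordan's lemma on the arc, shows each inner integral equals $0$. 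Therefore $I_{\partial D^+}(x) = 0$ and the inversion identity follows.

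The main obstacle is the geometric bookkeeping: verifying sectoral decay of the integrands uniformly in $y$ to justify Fubini, and strict positivity of the imaginary parts of $\la(x-\alpha y)$ and $\la(x-\alpha^2 y)$ throughout $\overline{D^+}$ to justify the contour closure. Once these sector estimates are in hand, everything reduces to standard Fourier inversion and Cauchy's theorem.
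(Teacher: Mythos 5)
Your proposal is correct, and it reaches the conclusion by a genuinely different route at the key step. The skeleton is the same as the paper's: linearity by inspection, split the inversion contour into $\RR$ and $\partial D^+$, dispatch the $\RR$ piece by classical Fourier inversion, and kill the $\partial D^+$ piece by closing the contour inside $D^+$. Where you differ is in \emph{how} the $\partial D^+$ integral is shown to vanish. The paper keeps the $\lambda$-integral intact: it analytically continues the $\partial D^+$ formula for $F[\phi]$ to a neighbourhood of $\clos D^+$, integrates by parts once in $y$ to get $F[\phi](\la)=\bigoh{\la^{-1}}$ uniformly in $\arg\la$ on the closed sector, and then invokes Jordan's lemma (using the decay of $\re^{\ri\la x}$ in the upper half-plane for $x>0$) together with Cauchy's theorem. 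You instead exchange the order of integration and observe that each inner integral $\int_{\partial D^+}\re^{\ri\la(x-\alpha y)}\D\la$ and $\int_{\partial D^+}\re^{\ri\la(x-\alpha^2 y)}\D\la$ vanishes outright, because for each fixed $y\geq 0$ and $x>0$ the arguments of $\la(x-\alpha y)$ and $\la(x-\alpha^2 y)$ lie strictly inside $(0,\pi)$ throughout $\clos D^+\setminus\{0\}$, giving genuine exponential decay on the closing arc; your sector computations and the boundary-ray bound $\re^{-\sqrt3\abs\la x/2}$ justifying Fubini are both correct (note the decay of the inner integrands is not uniform in $y$, but this is harmless since each inner integral is exactly zero for each fixed $y$ after Fubini). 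Your version is more elementary and self-contained --- no asymptotics of $F[\phi]$ itself are needed, and the only analysis is on explicit exponentials --- at the cost of the Fubini bookkeeping. The paper's version is the one that scales: the template ``uniform $\bigoh{\la^{-1}}$ decay, then Jordan plus Cauchy'' is reused essentially verbatim in theorem~\ref{thm:Stokes.Diag} and in the heat-equation example, where the integrand on the sector boundary is a ratio $\zeta^\pm/\Delta$ for which no such term-by-term unpacking is available. (The range ``$x\in(0,1)$'' in the statement is evidently a typo for $x\in(0,\infty)$; your proof, like the paper's, covers all $x>0$.)
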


\begin{proof}
    These are integral transforms, so they inherit linearity from the improper definite integrals and contour integrals of which they are composed.
    
    By definition,
    \begin{align} \notag
        F^{-1}[F[\phi]](x)
        &= \frac1{2\pi} \int_{-\infty}^\infty \re^{\ri\la x} F[\phi](\la) \D\la + \frac1{2\pi} \int_{\partial D^+} \re^{\ri\la x} F[\phi](\la) \D\la \\
        \label{eqn:Stokes.Validity.Proof1}
        &= \phi(x) + \frac1{2\pi} \int_{\partial D^+} \re^{\ri\la x} F[\phi](\la) \D\la,
    \end{align}
    where the second equality holds for all $x\in(0,\infty)$ at which $\phi$ is continuous, and is justified by the usual Fourier inversion theorem for piecewise smooth $\phi$.
    It remains only to show that the remaining contour integral term evaluates to $0$.
    
    The definition of $F[\phi](\la)$ is analytically extensible from $\partial D^+$ to a neighbourhood of the sector $D^+$.
    Doing so will necessarily overwrite the definition of $F[\phi](\la)$ on part of $\RR$, but that is inconsequential as we have already removed that part of the domain from consideration; we, for the purposes of the rest of this proof, consider $F[\phi](\la)$ as being defined by its $\partial D^+$ formula everywhere on $\CC$.
    Integrating by parts, we see that, as $\la\to\infty$ from within $\clos D^+$,
    \[
        F[\phi](\la) = \frac\ri{\la} \left\{ \left[ \phi(y)\left( \alpha\re^{-\ri\alpha\la y} + \alpha^2\re^{-\ri\alpha^2\la y} \right) \right]_0^\infty
        - \int_0^\infty \phi'(y)\left( \alpha\re^{-\ri\alpha\la y} + \alpha^2\re^{-\ri\alpha^2\la y} \right) \D y \right\} = \lindecayla,
    \]
    and this decay is uniform in $\arg(\la)$ within the given sector.
    Hence, by Jordan's lemma and Cauchy's theorem, the remaining integral on the right of equation~\eqref{eqn:Stokes.Validity.Proof1} evaluates to $0$.
\end{proof}

\begin{thm} \label{thm:Stokes.Diag}
    Suppose $F,F^{-1}$ are defined by equations~\eqref{eqn:defnF.Stokes}.
    There exists a remainder transform $R$ for which, for all $\la\in \RR\cup \partial D^+$ and all $\phi\in\Dom L$,
    \begin{equation} \label{eqn:Stokes.Diag.1}
        F[L\phi](\la) = -\ri\la^3 F[\phi](\la) + R[\phi](\la).
    \end{equation}
    Moreover, if, for all $t\in[0,T]$ $q(\argdot,t)\in\Dom L$ then, for all $t\in[0,T]$ and all $x\in(0,\infty)$,
    \begin{equation} \label{eqn:Stokes.Diag.2}
        F^{-1}\left[ \int_0^t \re^{-\ri\argdot^3(s-t)} R[q](\argdot;s) \D s \right](x) = 0.
    \end{equation}
\end{thm}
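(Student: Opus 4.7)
The plan is twofold: establish the algebraic identity~\eqref{eqn:Stokes.Diag.1} via integration by parts, then establish the remainder-vanishing identity~\eqref{eqn:Stokes.Diag.2} by exploiting a symmetry between the two pieces of the contour, combined with Cauchy's theorem and a Jordan-type estimate.

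For~\eqref{eqn:Stokes.Diag.1}, I would integrate by parts three times in $y$ against each kernel. The kernels $\re^{-\ri\la y}$ (on $\RR$) and $u(\la,y) := \alpha^2\re^{-\ri\alpha\la y} + \alpha\re^{-\ri\alpha^2\la y}$ (on $\partial D^+$) both satisfy $\partial_y^3 u = \ri\la^3 u$---for the latter, using $\alpha^3 = 1$---which produces the diagonal term $-\ri\la^3 F[\phi](\la)$. Schwartz decay of $\phi$ kills all boundary contributions at $y = \infty$, and the boundary term proportional to $\phi'(0)$ vanishes by the Neumann condition in $\Dom L$. Using $u(0) = 1$, $u''(0) = -\la^2$ on $\RR$, and $u(0) = \alpha + \alpha^2 = -1$, $u''(0) = -\la^2(\alpha + \alpha^2) = \la^2$ on $\partial D^+$ (via $1 + \alpha + \alpha^2 = 0$), the two surviving boundary contributions at $y = 0$ give
\[
    R[\phi](\la) = \begin{cases}\la^2\phi(0) - \phi''(0) & \la \in \RR, \\ \phi''(0) - \la^2\phi(0) & \la \in \partial D^+.\end{cases}
\]

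The key structural observation is the sign flip $R[\phi]|_{\partial D^+} = -R[\phi]|_{\RR}$, combined with the polynomial character of $R$ in $\la$. Setting $P(\la,s) := \la^2 q(0,s) - q_{xx}(0,s)$ and $\tilde P(\la, t) := \int_0^t \re^{-\ri\la^3(s-t)}P(\la,s)\D s$---both entire in $\la$---the quantity in~\eqref{eqn:Stokes.Diag.2} simplifies to
\[
    \frac1{2\pi}\left(\int_\RR - \int_{\partial D^+}\right) \re^{\ri\la x}\tilde P(\la, t)\D\la.
\]
I would close this contour in the upper half plane using arcs at infinity through the two sectors $\arg\la \in (0, \pi/3)$ and $(2\pi/3, \pi)$, the connected components of $\{\Im\la > 0\}\setminus\clos D^+$. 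Since $\tilde P$ is entire in $\la$, Cauchy's theorem reduces the problem to proving that these arc integrals vanish as the radius $R \to \infty$.

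The main obstacle is this last step, because on $\RR$ a naive estimate gives only $|\tilde P(\la, t)| = O(|\la|^2)$, which is too large for the arcs to vanish. To extract the necessary additional $\la^{-3}$ factor, I would integrate by parts once in $s$:
\[
    \tilde P(\la, t) = \frac{\ri}{\la^3}P(\la, t) - \frac{\ri\re^{\ri\la^3 t}}{\la^3}P(\la, 0) - \frac{\ri}{\la^3}\int_0^t \re^{-\ri\la^3(s-t)}\partial_s P(\la, s)\D s.
\]
By the PDE, $q_t = -q_{xxx}$, so $\partial_s P(\la, s) = -\la^2 q_{xxx}(0, s) + q_{xxxxx}(0, s)$, preserving the $O(|\la|^2)$ growth. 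In the closed sectors $\arg\la \in [0, \pi/3]$ and $[2\pi/3, \pi]$ one has $\Im(\la^3) \geq 0$, whence $|\re^{\ri\la^3\tau}| \leq 1$ for all $\tau \geq 0$. Consequently, $|\tilde P(\la, t)| = O(|\la|^{-1})$ uniformly in these closed sectors. Jordan's lemma (using $|\re^{\ri\la x}| \leq 1$ in the upper half plane for $x > 0$) then forces the arc integrals to zero, completing the proof.
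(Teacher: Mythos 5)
Your proposal is correct and follows essentially the same route as the paper: integration by parts yields the same remainder $R[\phi](\la)=\mp\bigl(\phi''(0)-\la^2\phi(0)\bigr)$ with the sign flip between $\RR$ and $\partial D^+$, and the vanishing of~\eqref{eqn:Stokes.Diag.2} is obtained by deforming through the complementary sectors $\arg\la\in(0,\frac\pi3)\cup(\frac{2\pi}3,\pi)$, using an integration by parts in $s$ to get the uniform $\bigoh{\la^{-1}}$ bound, then Jordan's lemma and Cauchy's theorem. The only inessential difference is that you invoke the PDE to control $\partial_sP$, where temporal smoothness of $q$ alone suffices.
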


\begin{proof}
    Integrating by parts thrice in the definition of $F[\phi](\la)$ and applying the boundary condition $\phi'(0)=0$, we find that equation~\eqref{eqn:Stokes.Diag.1} holds with
    \begin{equation*}
        R[\phi](\la) =
        \begin{cases}
            -r(\la;\phi) & \mbox{if } \la\in\RR, \\
             r(\la;\phi) & \mbox{if } \la\in\partial D^+,
        \end{cases} \qquad\qquad
        r(\la;\phi) = \phi''(0) - \la^2\phi(0),
    \end{equation*}
    in which we think of the polynomial $r(\argdot;\phi)$ as having domain all of $\CC$.
    Note that it is entire, and is $o(\la^3)$ as $\la\to\infty$.
    
    Let $E^+$ be the union of sectors $\arg(\la)\in(0,\frac\pi3)\cup(\frac{2\pi}3,\pi)$.
    Then, integrating by parts, as $\la\to\infty$ from within $\clos E^+$,
    \begin{align*}
        \int_0^t \re^{-\ri\la^3(s-t)} r(\la;q(\argdot,s)) \D s
        &= \frac{\ri}{\la^3} \left\{ \left[ \re^{-\ri\la^3(s-t)} r(\la;q(\argdot,s)) \right]_0^t + \int_0^t \re^{-\ri\la^3(s-t)} r(\la;q_t(\argdot,s)) \D s \right\} \\
        &= \lindecayla,
    \end{align*}
    uniformly in $\arg(\la)$ within those closed sectors.
    Therefore, by Jordan's lemma and Cauchy's theorem,
    \[
        \int_{\partial E^+} \re^{\ri\la x} \int_0^t \re^{-\ri\la^3(s-t)} r(\la;q(\argdot,s)) \D s \D\la = 0.
    \]
    Hence, by comparing the sector boundaries $\partial E^+$ and $\partial D^+$,
    \begin{align*}
        \int_{\partial D^+} \re^{\ri\la x} \int_0^t \re^{-\ri\la^3(s-t)} r(\la;q(\argdot,s)) \D s \D\la
        &= \int_{\partial D^+ \cup \partial E^+} \re^{\ri\la x} \int_0^t \re^{-\ri\la^3(s-t)} r(\la;q(\argdot,s)) \D s \D\la \\
        &= \int_{-\infty}^\infty \re^{\ri\la x} \int_0^t \re^{-\ri\la^3(s-t)} r(\la;q(\argdot,s)) \D s \D\la.
    \end{align*}
    Therefore,
    \begin{multline*}
        F^{-1}\left[ \int_0^t \re^{-\ri\argdot^3(s-t)} R[q](\argdot;s) \D s \right](x) \\
        = \frac1{2\pi} \int_0^\infty \re^{\ri\la x} \int_0^t \re^{-\ri\la^3(s-t)} \Big( r(\la;q(\argdot,s)) - r(\la;q(\argdot,s)) \Big) \D s \D\la
        = 0. \qedhere
    \end{multline*}
\end{proof}

In proposition~\ref{prop:Stokes.Validity} we have fulfilled criterion~\ref{crit:Invertibility}, and theorem~\ref{thm:Stokes.Diag} establishes criterion~\ref{crit:FokasDiagonalization}.
Therefore, the general transform method of~\S\ref{ssec:Introduction.GeneralTransformMethod} can be implemented for this problem to derive solution~\eqref{eqn:GeneralTransform3}.

The argument is presented above in such a way that it requires minimal modification for the Dirichlet problem; the transform pair may be found in~\cite[\S3.3]{DTV2014a} and $r(\la;\phi)$ is replaced with $\phi''(0)+\ri\la\phi'(0)$, which is still entire and $o(\la^3)$.

\subsection{Finite interval problem for the heat equation with a nonlocal condition} \label{ssec:Results.Heat}

As derived in~\cite{MS2018a}, the Fokas transform pair is
\begin{subequations} \label{eqn:defnF.Heat}
\begin{align}
    F[\phi](\la) &=
    \begin{cases}
        \displaystyle\int_0^1 \re^{-\ri\la y} \phi(y) \D y & \la \in \RR, \\
        -\zeta^+(\la;\phi)/\Delta(\la) & \la \in \partial D_\rho^+, \\
        -\re^{-\ri\la}\zeta^-(\la;\phi)/\Delta(\la) \qquad\qquad\hspace{0.42em} & \la \in \partial D_\rho^-,
    \end{cases} \\
    F^{-1}[f](x) &= \frac1{2\pi} \int_{\RR\cup D_\rho^+\cup D_\rho^-} \re^{\ri\la x} f(\la) \D\la \qquad\qquad x \in [0,1],
\end{align}
\end{subequations}
where
\begin{align}
    \Delta(\la) &= \int_0^1 K(y) \cos([1-y]\la) \D y, \\
    \label{eqn:defnzetap.Heat}
    \zeta^+(\la;\phi) &= \int_0^1 K(y) \cos([1-y]\la) \int_0^y \re^{-\ri\la z} \phi(z) \D z \D y + \int_0^1 K(y) \re^{-\ri\la y} \int_y^1 \cos([1-z]\la) \phi(z) \D z \D y, \\
    \label{eqn:defnzetam.Heat}
    \zeta^-(\la;\phi) &= \int_0^1 K(y) \int_y^1 \sin([z-y]\la) \phi(z) \D z \D y,
\end{align}
and $D^\pm_\rho=\{\la\in\CC^\pm$ such that $\Re(\la^2)<0$ and $\abs\la>\rho\}$ for $\rho$ sufficiently large to ensure all zeros of $\Delta$ have imaginary part bounded between $\pm \rho/\sqrt2$, and $\partial D^\pm_\rho$ indicates the positively oriented contour traversing the boundary of the region $D^\pm_\rho$.
The existence of such $\rho$ was proved in~\cite[lemma~2.1]{MS2018a}.

\begin{prop} \label{prop:Heat.Validity}
    If $F,F^{-1}$ are defined by equations~\eqref{eqn:defnF.Heat}, then they are both linear and, for all $\phi$ sufficiently smooth and all $x\in(0,1)$, $F^{-1}[F[\phi]](x) = \phi(x)$.
\end{prop}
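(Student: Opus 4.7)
The argument mirrors that of proposition~\ref{prop:Stokes.Validity}. Linearity of $F$ and $F^{-1}$ is inherited from the integral definitions. For the inversion identity, split the contour of $F^{-1}$ into its three pieces to obtain
\begin{align*}
    F^{-1}[F[\phi]](x) &= \frac1{2\pi}\int_{-\infty}^\infty \re^{\ri\la x}\int_0^1 \re^{-\ri\la y}\phi(y)\D y\D\la \\
    &\quad - \frac1{2\pi}\int_{\partial D_\rho^+} \re^{\ri\la x}\frac{\zeta^+(\la;\phi)}{\Delta(\la)}\D\la - \frac1{2\pi}\int_{\partial D_\rho^-} \re^{\ri\la(x-1)}\frac{\zeta^-(\la;\phi)}{\Delta(\la)}\D\la.
\end{align*}
The $\RR$ integral evaluates to $\phi(x)$ for $x\in(0,1)$ by the classical Fourier inversion theorem applied to the zero extension of $\phi$ to $\RR$, so the task reduces to showing that the two remaining contour integrals vanish.

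For the $\partial D_\rho^+$ integral, by~\cite[lemma~2.1]{MS2018a} the function $\Delta$ has no zeros in $\overline{D_\rho^+}$, while $\zeta^+(\argdot;\phi)$ is entire, so the integrand is analytic throughout $D_\rho^+$. Following the template of the Stokes proof, the plan is to close $\partial D_\rho^+$ with an arc $\{|\la|=R\}$ lying inside $D_\rho^+$, apply Cauchy's theorem to the bounded enclosed region, and let $R\to\infty$, showing that the arc contribution vanishes.

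Producing the necessary decay estimate is the main technical task. Inside $D_\rho^+$ one has $\Im\la \geq |\la|/\sqrt2$; set $b=\Im\la$. Direct modulus bounds on each of the factors $\cos([1-y]\la)$, $\re^{-\ri\la z}$ and $\re^{-\ri\la y}$ appearing in~\eqref{eqn:defnzetap.Heat} give $|\zeta^+(\la;\phi)|=\bigoh{\re^b/|\la|}$ for sufficiently smooth, decaying $\phi$. For $\Delta$, the smoothness of $K$ together with its support in a neighbourhood of $0$ allows repeated integration by parts, producing the asymptotic $\Delta(\la)\sim\ri K(0)\re^{-\ri\la}/(2\la)$ as $|\la|\to\infty$ in $\overline{D_\rho^+}$ (with a higher-order $1/\la^k$ refinement if $K$ vanishes to order $k-1$ at $0$). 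Hence $|1/\Delta(\la)|=\bigoh{|\la|\re^{-b}}$. Combined with $|\re^{\ri\la x}|=\re^{-xb}$, the integrand is $\bigoh{\re^{-xb}}$ uniformly on $\overline{D_\rho^+}$, so the arc integral is bounded by a constant multiple of $R\re^{-xR/\sqrt2}$, which tends to zero since $x>0$.

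The $\partial D_\rho^-$ integral is handled symmetrically. Now $b=-\Im\la>0$ on that side, so $|\re^{\ri\la(x-1)}|=\re^{-(1-x)b}$ decays thanks to $x<1$. Analogous estimates give $|\zeta^-(\la;\phi)|=\bigoh{\re^b/|\la|^2}$ and $|1/\Delta(\la)|=\bigoh{|\la|\re^{-b}}$, yielding an arc integrand of order $\re^{-(1-x)b}$ and a vanishing arc integral. The principal obstacle throughout is producing sufficiently sharp quantitative decay estimates for $\zeta^\pm/\Delta$; the smoothness and support hypotheses on $K$ are essential, since they prevent $|\Delta|$ from being too small and thus keep $1/\Delta$ from spoiling the decay that the factor $\re^{\ri\la x}$ (respectively $\re^{\ri\la(x-1)}$) is providing on the arc at infinity.
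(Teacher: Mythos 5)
Your proof is correct and follows the same overall strategy as the paper's (split the inversion contour, apply classical Fourier inversion on $\RR$, and kill the two sector integrals by analyticity plus a vanishing arc at infinity), but you handle the key technical step differently. The paper simply invokes \cite[lemma~2.2]{MS2018a}, which gives the uniform decay $\zeta^\pm(\la;\phi)/\Delta(\la)=\lindecayla$ in $\clos D_\rho^\pm$, and then applies Jordan's lemma; you instead rederive cruder modulus bounds ($\abs{\zeta^+}=\bigoh{\re^b/\abs\la}$, $\abs{1/\Delta}=\bigoh{\abs\la \re^{-b}}$, hence only $\zeta^+/\Delta=\bigoh{1}$) and compensate by exploiting that $b=\Im\la\geq\abs\la/\sqrt2$ throughout $\clos D_\rho^+$, so the factor $\re^{\ri\la x}$ supplies genuine exponential decay $\re^{-x\abs\la/\sqrt2}$ for each fixed $x\in(0,1)$. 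That trade is legitimate here precisely because the sectors stay away from the real axis, so the full strength of Jordan's lemma (and of the $\lindecayla$ estimate) is not needed; the cost is that your bound degenerates as $x\to0^+$ or $x\to1^-$, whereas the paper's cited estimate is uniform. Two caveats worth making explicit if you write this up in full: your asymptotic $\Delta(\la)\sim\ri K(0)\re^{-\ri\la}/(2\la)$ presupposes $K(0)\neq0$ (you acknowledge the refinement otherwise, but should check that the lower bound on $\abs\Delta$ survives, since $\zeta^+$ does not obviously gain matching powers of $1/\la$ when $K$ vanishes to high order at $0$ --- this is exactly what the cited lemma~2.2 guarantees under its hypotheses); and your claim $\abs{\zeta^-}=\bigoh{\re^b/\abs\la^2}$ is not a ``direct modulus bound'' --- the naive estimate only gives $\bigoh{\re^b/\abs\la}$, and the extra power requires an integration by parts in $z$ whose boundary term at $z=1$ reproduces $\phi(1)\Delta(\la)/\la$ --- though the weaker bound already suffices for your arc argument.
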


\begin{proof}
    Linearity follows from linearity of the definite real integral and the contour integral.
    With $\zeta^\pm$ defined by equations~\eqref{eqn:defnzetap.Heat}--\eqref{eqn:defnzetam.Heat}, it was shown in~\cite[lemma~2.2]{MS2018a} that,
    provided $K$ is of bounded variation, $K$ is continuous and supported in a neighbourhood of $0$, and $\lVert\phi'\rVert_\infty$ is bounded,
    then, as $\la\to\infty$ from within $\clos D_\rho^\pm$, $\zeta^\pm(\la;\phi)/\Delta(\la) = \lindecayla$, uniformly in $\arg(\la)$.
    The ratios $\zeta^\pm/\Delta$ are, because of the choice of $\rho$ sufficiently large, analytic in neighbourhoods of $\clos D_\rho^\pm$.
    It follows by Jordan's lemma and Cauchy's theorem that the integrals along the boundaries of $D_\rho^\pm$ appearing in $F^{-1}[F[\phi]](x)$ both evaluate to $0$.
    Therefore,
    \[
        F^{-1}[F[\phi]](x) = \frac1{2\pi} \int_{-\infty}^\infty \re^{\ri\la x} F[\phi](\la) \D\la.
    \]
    The proposition follows from the usual Fourier inversion theorem.
\end{proof}

\begin{thm} \label{thm:Heat.Diag}
    Suppose $F,F^{-1}$ are defined by equations~\eqref{eqn:defnF.Heat}.
    There exists a remainder transform $R$ for which, for all $\la\in \RR\cup D_\rho^+\cup D_\rho^-$ and all $\phi\in\Dom L$,
    \begin{equation} \label{eqn:Heat.Diag.1}
        F[L\phi](\la) = \la^2 F[\phi](\la) + R[\phi](\la).
    \end{equation}
    Moreover, if $q:[0,1]\times[0,T] \to \CC$ is such that, for all $t\in[0,T]$, $q(\argdot,t)\in\Dom L$, and $q$ is sufficiently smooth in $t$, then, for all $t\in[0,T]$ and all $x\in(0,1)$,
    \begin{equation} \label{eqn:Heat.Diag.2}
        F^{-1}\left[ \int_0^t \re^{\argdot^2(s-t)} R[q](\argdot;s) \D s \right](x) = 0.
    \end{equation}
\end{thm}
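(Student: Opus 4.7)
The plan is to follow the template of Theorem~\ref{thm:Stokes.Diag}: establish the diagonalization identity~\eqref{eqn:Heat.Diag.1} by integrating by parts twice to extract an explicit remainder $R[\phi](\la)$ expressed in boundary data, and verify the vanishing property~\eqref{eqn:Heat.Diag.2} through analytic continuation of the remainder into complementary sectors followed by Jordan's lemma and Cauchy's theorem.

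For the diagonalization identity, I would treat each of the three pieces of the domain of $F$ separately. On $\RR$, two integrations by parts in $\int_0^1\re^{-\ri\la y}(-\phi''(y))\D y$ produce $\la^2 F[\phi](\la)$ plus boundary contributions at $y=0$ and $y=1$; the condition $\phi'(1)=0$ annihilates the $\phi'(1)$ contribution. On $\partial D_\rho^\pm$, I would integrate by parts twice in the inner integrals of~\eqref{eqn:defnzetap.Heat}--\eqref{eqn:defnzetam.Heat} with $\phi$ replaced by $-\phi''$; the $-\la^2$ factor emerging combines with the outer $y$-integration to yield $-\la^2\zeta^\pm(\la;\phi)$, which after division by $\Delta(\la)$ (and multiplication by the prefactor $-\re^{-\ri\la}$ on $\partial D_\rho^-$) reproduces $\la^2 F[\phi](\la)$. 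The surviving boundary terms collect into $R[\phi](\la)$. The nonlocal condition $\int_0^1 K(y)\phi(y)\D y = 0$ is used to eliminate the one surviving contribution whose kernel is exactly $K$, so that $R[\phi](\la)$ is expressible as a combination of $\phi(0)$, $\phi'(0)$, and $\phi(1)$ with coefficients that are polynomial in $\la$ on $\RR$ and rational in $\la$ (with $\Delta(\la)$ in the denominator) on $\partial D_\rho^\pm$.

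For the vanishing property, I would introduce complementary sectors $E_\rho^\pm = \{\la\in\CC^\pm : \Re(\la^2) > 0,\ \abs\la > \rho\}$, on which $\abs{\re^{\la^2(s-t)}} \leq 1$ for $s\leq t$. The formulae defining $R[\phi](\la)$ on $\partial D_\rho^\pm$ extend analytically through $\clos D_\rho^\pm$ into $\clos E_\rho^\pm$: by the choice of $\rho$, every zero of $\Delta$ lies within the disk $\abs\la \leq \rho/\sqrt{2}$, so $1/\Delta(\la)$ is analytic on the sectors in question. An integration by parts in $s$, combined with the uniform $\lindecayla$ decay of $\zeta^\pm/\Delta$ from~\cite[Lemma~2.2]{MS2018a} applied to both $q(\argdot,s)$ and $q_t(\argdot,s)$, gives $\int_0^t \re^{\la^2(s-t)} R[q](\la;s)\D s = \lindecayla$ uniformly in $\arg(\la)$ within each closed sector. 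Jordan's lemma then forces $\int_{\partial E_\rho^\pm}$ of the $\re^{\ri\la x}$-weighted integrand to vanish, so Cauchy's theorem allows each contour $\partial D_\rho^\pm$ to be deformed onto the matching portion of $\RR$ outside $(-\rho,\rho)$. The three contributions along $\RR \cup \partial D_\rho^+\cup\partial D_\rho^-$ in the inverse transform then cancel through the sign structure of $R[\phi]$, yielding~\eqref{eqn:Heat.Diag.2}.

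The main obstacle I anticipate is the decay analysis on the $\partial D_\rho^\pm$ pieces. Individual boundary terms in the remainder are of the form $\la^k/\Delta(\la)$ for small $k$, and $\Delta(\la)$ is merely bounded away from zero on $\clos D_\rho^\pm \cup \clos E_\rho^\pm$, so no term in $R[\phi]$ enjoys $\lindecayla$ decay on its own; the required decay materialises only through cancellations between the numerator terms and the asymptotic expansion of $\Delta(\la)$, exactly as in~\cite[lemma~2.2]{MS2018a}. Adapting that estimate so that it applies after the $s$-integration by parts, and verifying that the sign conventions in $R[\phi]$ on $\RR$ align with those on the deformed $\partial D_\rho^\pm$ contours to produce the desired cancellation, is the delicate part of the argument.
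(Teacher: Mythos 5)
Your overall architecture matches the paper's proof (integrate by parts using the Neumann and nonlocal conditions, introduce the complementary sectors $E_\rho^\pm$, integrate by parts in $s$, apply Jordan's lemma and Cauchy's theorem, deform to $\RR$ and cancel), but there is a genuine structural error in your description of the remainder, and it is precisely the point on which the whole second half of the argument turns. You claim that on $\partial D_\rho^\pm$ the coefficients of $\phi(0),\phi'(0),\phi(1)$ in $R[\phi](\la)$ are rational with $\Delta(\la)$ in the denominator, and you then identify as the ``main obstacle'' a need for asymptotic cancellation between these numerators and the expansion of $\Delta$, in the style of~\cite[lemma~2.2]{MS2018a}. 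Neither is correct. When you integrate by parts twice inside $\zeta^\pm(\la;-\phi'')$, the surviving boundary terms at $z=0$ and $z=1$ carry the outer kernel $K(y)\cos([1-y]\la)$, whose $y$-integral is exactly $\Delta(\la)$; the one term whose kernel is $K(y)$ alone is killed by the nonlocal condition, and the $\phi'(1)$ terms by the Neumann condition. Dividing by $\Delta(\la)$ therefore leaves $r_+(\la;\phi)=-\phi'(0)-\ri\la\phi(0)$ and $r_-(\la;\phi)=-\ri\la\phi(1)$: polynomials in $\la$, entire, with no trace of $\Delta$. The ``delicate'' decay analysis you anticipate is a phantom; the required decay comes simply from integrating $\re^{\la^2(s-t)}$ by parts in $s$ against these polynomially bounded remainders.

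This matters because your proposed contour deformation would not go through with $\Delta$ in the denominator. All zeros of $\Delta$ lie in the strip $\abs{\Im\la}<\rho/\sqrt2$ and hence possibly inside the disc $\abs\la\leq\rho$, and $\partial D_\rho^\pm$ includes arcs of the circle $\abs\la=\rho$; deforming ``onto the matching portion of $\RR$ outside $(-\rho,\rho)$'' leaves those arcs unaccounted for, and passing them through the disc would cross the zeros of $\Delta$. The paper's route is: $\partial D_\rho^+\mapsto\gamma_\rho^+$ (real axis with a semicircular detour of radius $\rho$) by adding the vanishing $\partial E_\rho^+$ integral, and then $\gamma_\rho^+\mapsto\RR$ \emph{because the integrand is entire} --- which is available only because $\Delta$ has cancelled exactly. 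Once the remainders are recognised as entire, the three contributions along $\RR$, $\partial D_\rho^+$ and $\partial D_\rho^-$ collapse by the sign structure $R[\phi]\rvert_\RR = r_-\re^{-\ri\la}-r_+$ exactly as you intend (with care over the orientation reversal on $\partial D_\rho^-$, which the paper handles by noting that $\Re\la$ decreases along that contour). So the fix is localised --- carry out the integration by parts explicitly and observe the exact cancellation of $\Delta$ --- but as written the proposal misidentifies both the form of $R$ and the mechanism that makes~\eqref{eqn:Heat.Diag.2} hold.
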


\begin{proof}
    Integration by parts and application of the boundary and nonlocal conditions yield that
    \[
        R[\phi](\la) =
        \begin{cases}
            r_-(\la;\phi)\re^{-\ri\la}-r_+(\la;\phi) & \mbox{if } \la\in\RR, \\
            r_+(\la;\phi) & \mbox{if } \la\in\partial D_\rho^+, \\
            r_-(\la;\phi) & \mbox{if } \la\in\partial D_\rho^-,
        \end{cases}
    \]
    where
    \[
        r_+(\la;\phi) := -\phi'(0) - \ri\la\phi(0),
        \qquad
        r_-(\la;\phi) := -\ri\la\phi(1).
    \]
    Note that, for each of the three contours on which $R[\phi]$ is defined, it may be analytically extended to all of $\CC$, yielding a triply defined function on $\CC$, with each definition entire.
    
    We denote by $E^\pm_\rho$ the sets $\{\la\in\CC^\pm$ such that $\Re(\la^2)>0$ and $\abs\la>\rho\}$.
    Then, integrating by parts,
    \[
        \int_0^t \re^{\la^2(s-t)} r_+(\la;q(\argdot,s)) \D s = \bigoh{\la^{-2}}
    \]
    as $\la\to\infty$ from within $\clos E_\rho^\pm$, uniformly in $\arg(\la)$.
    Hence, by Jordan's lemma,
    \[
        \int_{\partial E_\rho^+} \re^{\ri\la x} \int_0^t \re^{\la^2(s-t)} r_+(\la;q(\argdot,s)) \D s \D\la = 0.
    \]
    It follows, by comparing the paths of the contours, that
    \[
        \int_{\partial D_\rho^+} \re^{\ri\la x} \int_0^t \re^{\la^2(s-t)} r_+(\la;q(\argdot,s)) \D s \D\la = \int_{\gamma_\rho^+} \re^{\ri\la x} \int_0^t \re^{\la^2(s-t)} r_+(\la;q(\argdot,s)) \D s \D\la,
    \]
    in which $\gamma_\rho^+$ is the contour that extents from $-\infty$ to $-\rho$, then follows the semicircular path in $\CC^+$ from $-\rho$ to $\rho$, then proceeds from $\rho$ to $\infty$.
    Because the integrand is entire, the contour $\gamma_\rho^+$ may be deformed to the real line.
    Similarly, but noting that the real part of $\la$ decreases as $\la$ traverses $\partial D_\rho^-$, whereas it increased when $\la$ followed $\partial D_\rho^+$,
    \[
        \int_{\partial D_\rho^-} \re^{\ri\la(x-1)}\int_0^t \re^{\la^2(s-t)} r_-(\la;q(\argdot,s)) \D s \D\la = -\int_{-\infty}^\infty \re^{\ri\la(x-1)} \int_0^t \re^{\la^2(s-t)} r_-(\la;q(\argdot,s)) \D s \D\la.
    \]
    
    Suppresing dependence of $r_\pm$ on $\la,q(\argdot;s)$, and using the previous two displayed equations to justify the second equality, it follows that
    \begin{align*}
        F^{-1}\left[ \int_0^t \re^{\argdot^2(s-t)} R[q](\argdot;s) \D s \right](x) \hspace{-10em} &\hspace{10em}
        = \int_{-\infty}^\infty \re^{\ri\la x} \int_0^t \re^{\la^2(s-t)} [r_-\re^{-\ri\la} - r_+] \D s \D\la \\
        &\hspace{3em} + \int_{\partial D_\rho^+} \re^{\ri\la x} \int_0^t \re^{\la^2(s-t)} r_+ \D s \D\la + \int_{\partial D_\rho^-} \re^{\ri\la (x-1)} \int_0^t \re^{\la^2(s-t)} r_- \D s \D\la \\
        &= \int_{-\infty}^\infty \re^{\ri\la x} \int_0^t \re^{\la^2(s-t)} [r_-\re^{-\ri\la} - r_+] \D s \D\la + \int_{-\infty}^\infty \re^{\ri\la x} \int_0^t \re^{\la^2(s-t)} r_+ \D s \D\la \\
        &\hspace{3em} - \int_{-\infty}^\infty \re^{\ri\la (x-1)} \int_0^t \re^{\la^2(s-t)} r_- \D s \D\la = 0.
        \qedhere
    \end{align*}
\end{proof}

Because proposition~\ref{prop:Heat.Validity} and theorem~\ref{thm:Heat.Diag} follow exactly the archetypes of criteria~\ref{crit:Invertibility} and~\ref{crit:FokasDiagonalization}, the transform method is effective at finding the solution of problem~\eqref{eqn:IBVPHeat}, with $\omega(\la)=\la^2$.

\subsection{Half line Dirichlet problem for the linearized BBM equation} \label{ssec:Results.lBBM}

Deconinck and Vasan show in~\cite[\S3]{DV2013a} that the Fokas transform pair is
\begin{subequations} \label{eqn:defnF.lBBM}
\begin{align}
    F[\phi](\la) &=
    \begin{cases}
        \displaystyle \int_0^\infty \re^{-\ri\la y} \phi(y) \D y & \la \in \RR, \\
        \displaystyle \frac{-1}{\la^2} \int_0^\infty \re^{\frac{-\ri}{\la} y} \phi(y) \D y \hspace{1.9em} & \la \in \partial \mathcal{C},
    \end{cases} \\
    F^{-1}[f](x) &= \frac1{2\pi} \int_{\RR\cup \mathcal{C}} \re^{\ri\la x} f(\la) \D\la \qquad\qquad x \in [0,\infty),
\end{align}
\end{subequations}
where $\mathcal{C}$ is a small positively oriented simple closed contour enclosing $\la=\ri$.

\begin{prop} \label{prop:lBBM.Validity}
    If $F,F^{-1}$ are defined by equations~\eqref{eqn:defnF.lBBM}, then they are both linear and, for all $\phi$ sufficiently smooth and all $x\in(0,\infty)$, $F^{-1}[F[\phi]](x) = \phi(x)$.
\end{prop}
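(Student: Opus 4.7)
The plan is to follow the pattern of the proofs of Propositions~\ref{prop:Stokes.Validity} and~\ref{prop:Heat.Validity}. First, linearity of both $F$ and $F^{-1}$ is inherited from linearity of the improper real integral and the contour integral over $\mathcal{C}$. For the inversion identity I split
\[
F^{-1}[F[\phi]](x) = \frac{1}{2\pi}\int_{-\infty}^\infty \re^{\ri\la x}\int_0^\infty\re^{-\ri\la y}\phi(y)\D y\,\D\la + \frac{1}{2\pi}\int_{\mathcal{C}}\re^{\ri\la x}F[\phi](\la)\D\la,
\]
using the $\RR$-formula for $F[\phi]$ in the first term and the $\partial\mathcal{C}$-formula in the second. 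The first term is a classical Fourier inversion of $\phi$ extended by zero to $(-\infty,0)$, so for piecewise smooth $\phi$ it equals $\phi(x)$ at every point of continuity of the extension, which includes every $x\in(0,\infty)$.

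It remains to show that the contour integral over $\mathcal{C}$ vanishes. In contrast to the arguments for the Stokes and heat examples, no appeal to Jordan's lemma is required: $\mathcal{C}$ is a compact closed curve, so a direct application of Cauchy's theorem suffices, provided the integrand extends holomorphically to the interior. That integrand is
\[
\re^{\ri\la x}\cdot\frac{-1}{\la^2}\int_0^\infty\re^{-\ri y/\la}\phi(y)\D y.
\]
The first factor is entire and $-\la^{-2}$ is holomorphic on $\CC\setminus\{0\}$. Because $\mathcal{C}$ is a small contour enclosing only $\ri$, I may assume it lies in the open upper half plane with $0$ in its exterior; on any such neighbourhood $\lvert\re^{-\ri y/\la}\rvert = \re^{-y\Im(\la)/\abs{\la}^2}$ decays exponentially in $y$ uniformly on compact subsets of $\la$, so the inner integral is holomorphic in $\la$ there by differentiation under the integral sign (justified via dominated convergence). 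Hence the full integrand is holomorphic inside and on $\mathcal{C}$, and Cauchy's theorem delivers the required $0$.

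The only real obstacle is the geometric bookkeeping that $\mathcal{C}$ can be placed inside the region $\{\la\in\CC:\Im(\la)>0\}\setminus\{0\}$ where the integral defining $F[\phi]$ on $\partial\mathcal{C}$ extends analytically to the interior. This follows immediately from the stipulation that $\mathcal{C}$ be a small contour about $\ri$, so the argument here is in fact shorter than those of Propositions~\ref{prop:Stokes.Validity} and~\ref{prop:Heat.Validity}, precisely because the auxiliary contour in the definition of $F$ is bounded rather than running to infinity.
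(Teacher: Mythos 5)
Your proof is correct and follows essentially the same route as the paper's: split off the real-line contribution and recover $\phi(x)$ by classical Fourier inversion, then kill the $\mathcal{C}$ contribution by observing that the integrand is analytic in $\CC^+$ (where $\mathcal{C}$ lies) and applying Cauchy's theorem. Your extra detail on the exponential decay of $\re^{-\ri y/\la}$ for $\Im(\la)>0$ just makes explicit the analyticity the paper asserts.
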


\begin{proof}
    These are integral transforms, so they are linear.
    By definition,
    \[
        F^{-1}[F[\phi]](x)
        = \frac1{2\pi} \int_{-\infty}^\infty \re^{\ri\la x} \int_0^\infty \re^{-\ri\la y} \phi(y) \D y \D\la
        - \frac1{2\pi}\int_{\mathcal{C}} \frac1{\la^2} \int_0^\infty \re^{\frac{-\ri}{\la} y} \phi(y) \D y \D\la.
    \]
    The integral of the second contour integral is analytic in $\CC^+$, in which $\mathcal{C}$ lies.
    Therefore, by Cauchy's theorem, the second contour integral evaluates to $0$.
    The proposition follows by the usual Fourier inversion theorem.
\end{proof}

\begin{thm} \label{thm:lBBM.Diag}
    Suppose $F,F^{-1}$ are defined by equations~\eqref{eqn:defnF.lBBM}.
    There exists a remainder transform $R$ for which, for all $\la\in \RR\cup \mathcal{C}$ and all $\phi\in\Dom L = \Dom M$,
    \begin{subequations} \label{eqn:lBBM.Diag.1}
    \begin{align} \label{eqn:lBBM.Diag.1L}
        F[L\phi](\la) &= \omega_L(\la) F[\phi](\la) + R_L[\phi](\la), \\ \label{eqn:lBBM.Diag.1M}
        F[M\phi](\la) &= \omega_M(\la) F[\phi](\la) + R_M[\phi](\la),
    \end{align}
    \end{subequations}
    where
    \[
        \omega_L(\la) = \begin{cases} \ri\la & \mbox{if } \la\in\RR, \\ \frac\ri\la & \mbox{if } \la\in\mathcal{C}, \end{cases}
        \qquad\qquad
        \omega_M(\la) = \begin{cases} 1+\la^2 & \mbox{if } \la\in\RR, \\ 1+ \frac1{\la^2} & \mbox{if } \la\in\mathcal{C}. \end{cases}
    \]
    Moreover, if $q$ is sufficiently smooth, $q(\argdot,t)\in\Dom L=\Dom M$ then, for all $t\in[0,T]$ and all $x\in(0,\infty)$,
    \begin{equation} \label{eqn:lBBM.Diag.2}
        F^{-1}\left[ \int_0^t \re^{(s-t)\omega} \frac{R_L[q](\argdot;s)+R_M[q_t](\argdot;s)}{\omega_M} \D s \right](x) = 0,
    \end{equation}
    where
    $\omega(\la)=\omega_L(\la)/\omega_M(\la)=\ri\la/(1+\la^2)$.
\end{thm}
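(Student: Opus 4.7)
My plan is to derive the diagonalization relations~(\ref{eqn:lBBM.Diag.1}) by direct integration by parts in the two pieces defining $F$, and then to establish the vanishing identity~(\ref{eqn:lBBM.Diag.2}) by a Cauchy's theorem argument that deforms the real line integral onto the contour $\mathcal{C}$ in $\CC^+$.

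For~(\ref{eqn:lBBM.Diag.1L}), a single integration by parts suffices. On $\RR$, using $\phi(0)=0$ and Schwartz decay, $\int_0^\infty \re^{-\ri\la y}\phi'(y)\,\D y = \ri\la F[\phi](\la)$; on $\mathcal{C}$, the analogous computation on $-\la^{-2}\int_0^\infty \re^{-\ri y/\la}\phi'(y)\,\D y$ (valid because $\Im(-\ri/\la)<0$ for $\la$ near $\ri$ guarantees decay at infinity) yields $(\ri/\la)F[\phi](\la)$, so $R_L\equiv 0$. For~(\ref{eqn:lBBM.Diag.1M}) I would integrate by parts twice on the $\phi''$ term; the boundary term $-\phi'(0)$ does not vanish, because the domain imposes no condition on $\phi'(0)$, and collecting factors yields $R_M[\phi](\la)=\phi'(0)$ on $\RR$ and $R_M[\phi](\la)=-\phi'(0)/\la^2$ on $\mathcal{C}$.

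Substituting these into~(\ref{eqn:lBBM.Diag.2}), and noting that $\omega(\la)=\omega_L(\la)/\omega_M(\la)=\ri\la/(1+\la^2)$ defines a single meromorphic function on $\CC$ consistent with both piecewise formulae, the quotients $R_M/\omega_M$ collapse to $+q_{tx}(0,s)/(1+\la^2)$ on $\RR$ and $-q_{tx}(0,s)/(1+\la^2)$ on $\mathcal{C}$. The claim therefore reduces to proving
\[
    \int_\RR \re^{\ri\la x}\frac{G(\la;t)}{1+\la^2}\,\D\la = \oint_\mathcal{C} \re^{\ri\la x}\frac{G(\la;t)}{1+\la^2}\,\D\la,
\]
where $G(\la;t):=\int_0^t \re^{(s-t)\omega(\la)}q_{tx}(0,s)\,\D s$. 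I would establish this by applying Cauchy's theorem in the doubly connected region $\CC^+\setminus(\text{interior of }\mathcal{C})$: the integrand is analytic there, since the only upper half plane singularity of $\omega$ and of $1/(1+\la^2)$ is $\la=\ri$, which is excised. Closing the real line with a large semicircular arc $\Sigma_R\subset\CC^+$, the arc contribution vanishes as $R\to\infty$ because $\omega(\la)\sim\ri/\la\to 0$ forces $|G(\la;t)|$ to be uniformly bounded, while $|1/(1+\la^2)|\sim|\la|^{-2}$ and $|\re^{\ri\la x}|=\re^{-x\Im(\la)}$ combine with Jordan's lemma (for $x>0$) to yield decay.

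The main subtlety I expect is the essential singularity of $\re^{(s-t)\omega(\la)}$ at $\la=\ri$, where $\omega$ has a simple pole; this rules out any naive residue calculation. The point is that the Fokas transform is engineered precisely to accommodate this: the $\mathcal{C}$ integral absorbs the singular contribution, and the proof uses Cauchy's theorem only on the \emph{complement} of a neighbourhood of $\ri$, where analyticity persists.
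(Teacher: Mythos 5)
Your proposal is correct and follows essentially the same route as the paper: the same integration by parts yielding $R_L\equiv 0$ and $R_M[\phi]=\phi'(0)$ on $\RR$, $-\phi'(0)/\la^2$ on $\mathcal{C}$, followed by the same reduction of~\eqref{eqn:lBBM.Diag.2} to a deformation of the real-line integral onto $\mathcal{C}$ via Cauchy's theorem and the $\bigoh{\la^{-2}}$ decay of the integrand in $\clos\CC^+$. The only blemish is a slip in your parenthetical justification of convergence on $\mathcal{C}$: the relevant condition is $\Re(-\ri/\la)<0$ (equivalently $\Im(\la)>0$), not $\Im(-\ri/\la)<0$; this does not affect the argument.
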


\begin{proof}
    Integration by parts and application of the boundary condition demonstrates equations~\eqref{eqn:lBBM.Diag.1} with $R_L=0$ and
    \[
        R_M[\phi](\la) =
        \begin{cases}
            \phi'(0) & \mbox{if } \la\in\RR, \\
            -\phi'(0)\frac1{\la^2} & \mbox{if } \la\in\mathcal{C}.
        \end{cases}
    \]
    Therefore, the left side of equation~\eqref{eqn:lBBM.Diag.2} is
    \begin{multline} \label{eqn:lBBM.Diag.Proof1}
        \frac1{2\pi} \int_{-\infty}^\infty \re^{\ri\la x} \int_0^t \re^{\frac{\ri\la}{1+\la^2}(s-t)} q_{xt}(0;s) \D s \frac1{1+\la^2} \D\la \\
        - \frac1{2\pi} \int_{\mathcal{C}} \re^{\ri\la x} \int_0^t \re^{\frac{\ri\la}{1+\la^2}(s-t)} q_{xt}(0;s) \D s \frac1{1+\frac1{\la^2}}\left(\frac1{\la^2}\right) \D\la.
    \end{multline}
    The integrand of the first contour integral in expression~\eqref{eqn:lBBM.Diag.Proof1} is analytic on $\CC\setminus\{\pm\ri\}$,
    It is straightforward to show that $\Re(\omega(\la))\geq0$ on $\clos\CC^+$ except on the closed disc with radius $1$ and center at $0$.
    Hence, as $\la\to\infty$ from within $\clos\CC^+$,
    \[
        \int_0^t \re^{\frac{\ri\la}{1+\la^2}(s-t)} q_{xt}(0;s) \D s \frac1{1+\la^2} = \bigoh{\la^{-2}},
    \]
    uniformly in $\arg(\la)$, and the same function is analytic everywhere but $\pm\ri$.
    Hence, by Jordan's lemma and Cauchy's theorem, the first contour integral in expression~\eqref{eqn:lBBM.Diag.Proof1} may be deformed from $\RR$ to $\mathcal{C}$, whereupon it cancels with the other contour integral in that expression.
    Thereby, equation~\eqref{eqn:lBBM.Diag.2} is established.
\end{proof}

\subsubsection*{Transform method}
Theorem~\ref{thm:lBBM.Diag} does not follow the archetype of criterion~\ref{crit:FokasDiagonalization}.
But~\eqref{eqn:IBVPlBBM.PDE} has form different from~\eqref{eqn:IBVPStokes.PDE}, for which the general transform method of~\S\ref{ssec:Introduction.GeneralTransformMethod} was developed, so it is unsurprising that the form of Fokas diagonalization is different.
Below, we implement the transform method applicable to problem~\eqref{eqn:IBVPlBBM}, noting how theorem~\ref{thm:lBBM.Diag} is used.

Applying the transform $F$ to~\eqref{eqn:IBVPlBBM.PDE}, we obtain
\[
    0 = \frac{\D}{\D t} F[Mq](\la;t) + F[Lq](\la;t),
\]
for all $\la\in\RR\cup\mathcal{C}$.
It follows from equations~\eqref{eqn:lBBM.Diag.1} that
\[
    0 = \frac{\D}{\D t} \Big( \omega_M(\la)F[q](\la;t) + R_M[q](\la;t) \Big) + \omega_L(\la)F[q](\la;t) + R_L[q](\la;t).
\]
Rearranging and multiplying by $\re^{\omega(\la)t}/\omega_M(\la)$, we find
\[
    0 = \frac{\D}{\D t} \left( \re^{\omega(\la)t} F[q](\la;t) \right) + \re^{\omega(\la)t} \frac{R_L[q](\la;t) + R_M[q_t](\la;t)}{\omega_M(\la)}.
\]
Integrating in time from $0$ to $t$, applying~\eqref{eqn:IBVPlBBM.IC} and rearranging, we find
\[
    F[q](\la;t) = \re^{-\omega(\la)t} F[Q](\la) - \int_0^t \re^{\omega(\la)(s-t)} \frac{R_L[q](\la;s)+R_M[q_t](\la;s)}{\omega_M(\la)} \D s.
\]
When we apply the inverse transform, because $q$ obeys~\eqref{eqn:IBVPlBBM.BC}, theorem~\ref{thm:lBBM.Diag} guarantees that the latter term evaluates to $0$.
Hence the solution of problem~\eqref{eqn:IBVPlBBM} is
\[
    q(x,t) = F^{-1}\left[ \re^{-\omega t} F[Q](\argdot;t) \right](x).
\]

\section{Conclusion} \label{sec:Conclusion}
We aimed both to show that the Fokas transform method can be reformulated to appear as a generalized spectral transform method for initial boundary value problems and to provide a unified characterization of Fokas diagonalization.

With the detailed work~\cite{ABS2022a} already demonstrating the former claim for finite interval problems with two point, multipoint and interface type boundary conditions, the ambit of the present work is to demonstrate the generalization to other settings by means of a few examples.
There exist important classes of initial boundary value problems for which the Fokas transform method has been implemented that are not represented among the examples studied above.
But significant progress has been made towards this aim.

Regarding the second aim, Fokas diagonalization in the first two examples exactly matches criterion~\ref{crit:FokasDiagonalization}, which also matches Fokas diagonalization for finite interval problems~\cite{ABS2022a}.
The final example demonstrates that the statement of Fokas diagonalization is properly considered as a property of the boundary value problem under study, rather than any particular (local) differential operator.
But it also reinforces the point that the statement of Fokas diagonalization may be read directly from the spectral transform method; it is precisely the necessary and sufficient condition for the spectral transform method to succeed.
Although, for the sake of brevity, not demonstrated in the current work, this extends to Fokas diagonalization for system problems such as those studied in~\cite{DGSV2018a,CGM2021a}.

\subsection{On pedagogy}
Having taught a few cohorts of undergraduates a typical course on boundary value problems, the author has observed that those learning spectral transform methods for the first time find it helpful to have an alternative viewpoint, or even introductory experience, in which the following two concepts are presented as separate:
\begin{enumerate}
    \item[C1.]{
        How a transform pair is used in a spectral method to solve an initial boundary value problem.
    }
    \item[C2.]{
        How the transform pair suitable for any particular problem may be derived.
    }
\end{enumerate}
It is to the detriment of students' learning, when the particular discrete Fourier transform appropriate for a given initial boundary value problem is derived by separation of variables only as part of a long solution method, and the definitions of the forward and inverse transforms are not explicitly identified as such.
Students may see solving a particular Sturm Liouville problem as part of solving a boundary value problem, but less commonly understand that they are deriving the spectral transform that will diagonalize the spatial differential operator for that problem.

Because C2 is quite difficult and technical, the relative simplicity and astonishingly broad applicability of C1 remain mysterious to many students.
The author has had some success in teaching first C1, with a hypothetical transform, and gently guiding students to discover for themselves C2: separation of variables and enough Sturm-Liouville theory to explictly construct the desired transform.

In a similar way, the Fokas transform method is usually presented in the literature as a monolithic method, at the end of which a solution has been derived, but the transform itself is rarely emphasized.
It is the author's intention that concept C1 for the Fokas transform method be essentially captured in~\S\ref{ssec:Introduction.GeneralTransformMethod}.
It is the author's hope that, by drawing attention to the crucial properties of the Fokas transform, criteria~\ref{crit:Invertibility} and~\ref{crit:FokasDiagonalization}, this work may lighten the burden of learning the Fokas transform method, and inspire more to study it.

\section*{Acknowledgement}

\AckNIDispHyd{The author}
\AckYNCSeed{The author also}

\bibliographystyle{amsplain}
{\small\bibliography{dbrefs}}

\providecommand{\bysame}{\leavevmode\hbox to3em{\hrulefill}\thinspace}
\providecommand{\MR}{\relax\ifhmode\unskip\space\fi MR }
\providecommand{\MRhref}[2]{%
  \href{http://www.ams.org/mathscinet-getitem?mr=#1}{#2}
}
\providecommand{\href}[2]{#2}
\begin{thebibliography}{10}

\bibitem{ABS2022a}
S.~A. Aitzhan, S.~Bhandari, and D.~A. Smith, \emph{Fokas diagonalization of
  piecewise constant coefficient linear differential operators on finite
  intervals and networks}, Acta Appl. Math. \textbf{177} (2022), no.~2, 1--66.

\bibitem{BBM1972a}
T.~B. Benjamin, J.~L. Bona, and J.~J. Mahony, \emph{Model equations for long
  waves in nonlinear dispersive systems}, Philos. Trans. R. Soc. Lond. Ser. A
  Math. Phys. Eng. Sci. \textbf{272} (1972), 47--78.

\bibitem{BH2008a}
G.~Biondini and G.~Hwang, \emph{Initial-boundary-value problems for discrete
  evolution equations: discrete linear {S}chr\"{o}dinger and integrable
  discrete nonlinear {S}chr\"{o}dinger equations}, Inverse Problems \textbf{24}
  (2008), no.~6, 65011.

\bibitem{Bir1908b}
G.~D. Birkhoff, \emph{Boundary value and expansion problems of ordinary linear
  differential equations}, Trans. Amer. Math. Soc. \textbf{9} (1908), 373--395.

\bibitem{Cro2015a}
D.~Crowdy, \emph{{F}ourier-{M}ellin transforms for circular domains}, Comput.
  methods Funct. Theory \textbf{15} (2015), 655--687.

\bibitem{CL2018a}
D.~G. Crowdy and E.~Luca, \emph{A transform method for the biharmonic equation
  in multiply connected circular domains}, IMA J. Appl. Math. \textbf{83}
  (2018), 942--976.

\bibitem{DGSV2018a}
B.~Deconinck, Q.~Guo, E.~Shlizerman, and V.~Vasan, \emph{{F}okas's unified
  transform method for linear systems}, Quart. of Appl. Math. \textbf{76}
  (2018), no.~3, 463--488.

\bibitem{DPS2014a}
B.~Deconinck, B.~Pelloni, and N.~E. Sheils, \emph{Non-steady-state heat
  conduction in composite walls}, Proc. R. Soc. Lond. Ser. A Math. Phys. Eng.
  Sci. \textbf{470} (2014), no.~2165, 20130605.

\bibitem{DS2020a}
B.~Deconinck and N.~Sheils, \emph{The time-dependent {S}chr\"{o}dinger equation
  with piecewise constant potentials}, Europ. J. of Appl. Math. \textbf{31}
  (2020), 57--83.

\bibitem{DSS2016a}
B.~Deconinck, N.~E. Sheils, and D.~A. Smith, \emph{The linear {K}d{V} equation
  with an interface}, Comm. Math. Phys. \textbf{347} (2016), 489--509.

\bibitem{DTV2014a}
B.~Deconinck, T.~Trogdon, and V.~Vasan, \emph{The method of {F}okas for solving
  linear partial differential equations}, SIAM Rev. \textbf{56} (2014), no.~1,
  159--186.

\bibitem{DV2013a}
B.~Deconinck and V.~Vasan, \emph{Well-posedness of boundary-value problems for
  the linear {B}enjamin-{B}ona-{M}ahony equation}, Discrete \& Continuous Dyn.
  Sys. A \textbf{33} (2013), no.~7, 3171--3188.

\bibitem{Fok2001a}
A.~S. Fokas, \emph{Two dimensional linear partial differential equations in a
  convex polygon}, Proc. R. Soc. Lond. Ser. A Math. Phys. Eng. Sci.
  \textbf{457} (2001), 371--393.

\bibitem{Fok2008a}
\bysame, \emph{A unified approach to boundary value problems}, CBMS-SIAM, 2008.

\bibitem{FP2001a}
A.~S. Fokas and B.~Pelloni, \emph{Two-point boundary value problems for linear
  evolution equations}, Math. Proc. Cambridge Philos. Soc. \textbf{131} (2001),
  521--543.

\bibitem{FP2007a}
\bysame, \emph{Generalized {D}irichlet to {N}eumann map for moving
  initial-boundary value problems}, J. Math. Phys. \textbf{48} (2007).

\bibitem{FPX2019a}
A.~S. Fokas, B.~Pelloni, and B.~Xia, \emph{Evolution equations on
  time-dependent intervals}, IMA J. Appl. Math. \textbf{84} (2019), 1044--1060.

\bibitem{FS2016a}
A.~S. Fokas and D.~A. Smith, \emph{Evolution {P}{D}{E}s and augmented
  eigenfunctions. {F}inite interval}, Adv. Differential Equations \textbf{21}
  (2016), no.~7/8, 735--766.

\bibitem{FW2018a}
A.~S. Fokas and Z.~Wang, \emph{Generalised {D}irichlet to {N}eumann maps for
  linear dispersive equations on half-line}, Math. Proc. Camb. Philos. Soc.
  \textbf{164} (2018), no.~2, 297--324.

\bibitem{Fre2012a}
G.~Freiling, \emph{Irregular boundary value problems}, Results Math.
  \textbf{62} (2012), 265--294.

\bibitem{GPV2019a}
R.~Govindarajan, S.~G. Prasath, and V.~Vasan, \emph{Accurate solution method
  for the {M}axey-{R}iley equation, and the effects of {B}asset history}, J.
  Fluid Mechanics \textbf{868} (2019), 428--460.

\bibitem{Hop1919a}
J.~W. Hopkins, \emph{Some convergent developments associated with irregular
  boundary conditions}, Trans. Amer. Math. Soc. \textbf{20} (1919), 245--259.

\bibitem{Jac1915a}
D.~Jackson, \emph{Expansion problems with irregular boundary conditions}, Proc.
  Amer. Acad. Arts Sci. \textbf{51} (1915), no.~7, 383--417.

\bibitem{CGM2021a}
C.~M. Johnston, C.~T. Gartman, and D.~Mantzavinos, \emph{The linearized
  classical {B}oussinesq system on the half-line}, Stud. Appl. Math.
  \textbf{145} (2021), no.~3, 635--657.

\bibitem{Loc2000a}
J.~Locker, \emph{Spectral theory of non-self-adjoint two-point differential
  operators}, Mathematical Surveys and Monographs, vol.~73, American
  Mathematical Society, Providence, Rhode Island, 2000.

\bibitem{Loc2008a}
\bysame, \emph{Eigenvalues and completeness for regular and simply irregular
  two-point differential operators}, vol. 195, Memoirs of the American
  Mathematical Society, no. 911, American Mathematical Society, Providence,
  Rhode Island, 2008.

\bibitem{MS2018a}
P.~D. Miller and D.~A. Smith, \emph{The diffusion equation with nonlocal data},
  J. Math. Anal. Appl. \textbf{466} (2018), no.~2, 1119--1143.

\bibitem{Pel2014a}
B.~Pelloni, \emph{{A}dvances in the study of boundary value problems for
  nonlinear integrable {PDE}s}, Nonlinearity \textbf{28} (2015), R1--R38.

\bibitem{PS2016a}
B.~Pelloni and D.~A. Smith, \emph{Evolution {P}{D}{E}s and augmented
  eigenfunctions. {H}alf line}, J. Spectr. Theory \textbf{6} (2016), 185--213.

\bibitem{PS2018a}
\bysame, \emph{Nonlocal and multipoint boundary value problems for linear
  evolution equations}, Stud. Appl. Math. \textbf{141} (2018), no.~1, 46--88.

\bibitem{Smi2012a}
D.~A. Smith, \emph{Well-posed two-point initial-boundary value problems with
  arbitrary boundary conditions}, Math. Proc. Cambridge Philos. Soc.
  \textbf{152} (2012), 473--496.

\bibitem{Smi2015a}
\bysame, \emph{The unified transform method for linear initial-boundary value
  problems: a spectral interpretation}, Unified transform method for boundary
  value problems: applications and advances, SIAM, Philadelphia, PA, 2015.

\bibitem{ST2022a}
D.~A. Smith and W.-Y. Toh, \emph{Linear evolution equations on the half line
  with dynamic boundary conditions}, Eur. J. Appl. Math. \textbf{33} (2021),
  no.~3, 505--537.

\end{thebibliography}

\end{document}